\documentclass[12pt]{amsart}
\usepackage{amsmath,amssymb,amsbsy,amsfonts,amsthm,latexsym, mathtools,amsopn,amstext,amsxtra,euscript,amscd,mathrsfs,color,bm, cite,multirow,tabularx}
\usepackage{graphicx} % For \resizebox command
\usepackage{todonotes,dirtytalk}   
\usepackage{url}
\usepackage[colorlinks,linkcolor=blue,anchorcolor=blue,citecolor=blue]{hyperref}
\usepackage{color}
\usepackage{comment}
\usepackage{caption}
\usepackage{fullpage}
\allowdisplaybreaks

\mathtoolsset{showonlyrefs}
\begin{document}
\newtheorem{problem}{Problem}
\newtheorem{thm}{Theorem}
\newtheorem{lem}[thm]{Lemma}
\newtheorem{crit}[thm]{Criterion}
\newtheorem{claim}[thm]{Claim}
\newtheorem{cor}[thm]{Corollary}
\newtheorem{prop}[thm]{Proposition}
\newtheorem{definition}{Definition}
\newtheorem{question}[thm]{Question}
\newtheorem{Note}[thm]{Notation}

\theoremstyle{remark} 
\newtheorem*{note}{Note}
\newtheorem{rem}{Remark}

%%%%%%%%%%%%%%%%%%%%%%%%%
% Alphabet calligraphic %
%%%%%%%%%%%%%%%%%%%%%%%%%
\def\cA{{\mathcal A}}
\def\cB{{\mathcal B}}
\def\cC{{\mathcal C}}
\def\cD{{\mathcal D}}
\def\cE{{\mathcal E}}
\def\cF{{\mathcal F}}
\def\cG{{\mathcal G}}
\def\cH{{\mathcal H}}
\def\cI{{\mathcal I}}
\def\cJ{{\mathcal J}}
\def\cK{{\mathcal K}}
\def\cL{{\mathcal L}}
\def\cM{{\mathcal M}}
\def\cN{{\mathcal N}}
\def\cO{{\mathcal O}}
\def\cP{{\mathcal P}}
\def\cQ{{\mathcal Q}}
\def\cR{{\mathcal R}}
\def\cS{{\mathcal S}}
\def\cT{{\mathcal T}}
\def\cU{{\mathcal U}}
\def\cV{{\mathcal V}}
\def\cW{{\mathcal W}}
\def\cX{{\mathcal X}}
\def\cY{{\mathcal Y}}
\def\cZ{{\mathcal Z}}

%%%%%%%%%%%%%%%%%%%%%%%
% Alphabet blackboard %
%%%%%%%%%%%%%%%%%%%%%%%
\def\A{{\mathbb A}}
\def\B{{\mathbb B}}
\def\C{{\mathbb C}}
\def\D{{\mathbb D}}
\def\E{{\mathbb E}}
\def\F{{\mathbb F}}
\def\G{{\mathbb G}}
\def\I{{\mathbb I}}
\def\J{{\mathbb J}}
\def\K{{\mathbb K}}
\def\L{{\mathbb L}}
\def\M{{\mathbb M}}
\def\N{{\mathbb N}}
\def\O{{\mathbb O}}
\def\P{{\mathbb P}}
\def\Q{{\mathbb Q}}
\def\R{{\mathbb R}}
\def\S{{\mathbb S}}
\def\T{{\mathbb T}}
\def\U{{\mathbb U}}
\def\V{{\mathbb V}}
\def\W{{\mathbb W}}
\def\X{{\mathbb X}}
\def\Y{{\mathbb Y}}
\def\Z{{\mathbb Z}}

\def\GL{\mathrm{GL}}
\def\SL{\mathrm{SL}}

\DeclarePairedDelimiter\ceil{\lceil}{\rceil}
\DeclarePairedDelimiter\floor{\lfloor}{\rfloor}

\def\xxx{\vskip5pt\hrule\vskip5pt}

\def\dmod#1{\,\left(\textnormal{mod }{#1}\right)}

\newcommand{\commG}[2][]{\todo[#1,color=green!30]{G:#2}}
\let\svthefootnote\thefootnote
\newcommand\freefootnote[1]{%
  \let\thefootnote\relax%
  \footnotetext{#1}%
  \let\thefootnote\svthefootnote%
}

%%%%%%%%%%%%%%%%%%
%% PAPER BEGINS %%
%%%%%%%%%%%%%%%%%%

\title{An explicit bound on $|\zeta_K(1+it)|$}

\author[G. Pelizzari]{Giulia Pelizzari}\thanks{}
\address{School of Science, The University of New South Wales Canberra, Australia}
\email{g.pelizzari@unsw.edu.au}
\keywords{Dedekind zeta--function, growth rate, explicit estimate.}
\subjclass{11M06, 11Y35.}

\pagenumbering{arabic}

\begin{abstract} Let $K$ be a number field of degree $n$ and discriminant $\Delta$. The estimate $|\zeta_K(1+it)|\le C_K\log{t}$, where $C_K$ is an unspecified constant depending on $K$, is a well--known result in the literature. We obtain an explicit upper bound for $|\zeta_K(1+it)|$, depending only on $n$ and $\Delta$. We also show how improvements on the bound depend on improvements on estimates of the ideal counting function $I_K$ and the residue of $\zeta_K$ at $s=1$.

\end{abstract}

\maketitle

%\freefootnote{\textit{Corresponding author}:}

    %\let\thefootnote\relax
{%
\renewcommand{\thefootnote}{}%
\footnote{\textit{Affiliation}: School of Science, University of New South Wales, Canberra.}%
}
{%
\renewcommand{\thefootnote}{}%
\footnote{\textit{E--mail}: g.pelizzari@unsw.edu.au.}%
}
\setcounter{footnote}{0}
\renewcommand{\thefootnote}{\arabic{footnote}}

\section{Introduction}
Let $K$ be a number field with ring of integers $\mathcal{O}_K$. For each ideal $\mathfrak{a}\subseteq\mathcal{O}_K$, let $\mathcal{N}\mathfrak{a}$ denote its norm.
For $s\in\C$ with $\Re(s)>1$, the Dedekind zeta--function associated to $K$ is $\zeta_K(s)=\sum_{\mathfrak{a}\subseteq\mathcal{O}_k}\mathcal{N}\mathfrak{a}^{-s}.$

The Riemann zeta--function $\zeta$ is the special case of $\zeta_K$ with $K=\Q$. It is then natural to ask whether results concerning $\zeta(s)$ generalise to $\zeta_K(s)$.

Let us set $t>0$. 
In analytic number theory, we are interested in the growth rate of $\zeta(s)$ on the line $\sigma=1$. A well--known bound in the literature is
\begin{equation}\label{zetabound}
    \zeta(1+it)=O(\log{t});
\end{equation} see, for example, \cite{titch}. 
Explicit versions of this bound have several applications in modern research. In particular, they play a central role in deriving sharp estimates for $N(T)$, the zero--counting function of $\zeta$ (see \cite{BellWong}).

%it yields subconvexity estimates in the critical strip via the approximate functional equation together with the Phragmén–Lindelöf principle. These tools play a central role in deriving sharp estimates for $N(T)$, the zero--counting function of $\zeta$. (see \cite{BellWong}).
An analogous result to \eqref{zetabound} for the case of the Dedekind zeta--function is known (see  \cite{LangBook}, \cite{ChaNar}). Given $K$ a number field of degree $n$ and discriminant $\Delta$, then for $t>t_0>0$,
    \begin{equation}
        \label{propOlogt} \zeta_K(1+it)=O_{n,\Delta}(\log{t}).
    \end{equation}
We are not aware of explicit bounds of the form \eqref{propOlogt} for $\zeta_K$ for any $K$; we provide one.
\begin{thm} \label{thm1}
    Let $K$ be a number field of degree $n$ and discriminant $\Delta$ and denote by $\kappa$ the residue of the Dedekind zeta--function $\zeta_K$ at $s=1$. Let $C:=e^{28.2n+5}(n+1)^{\frac{5(n+1)}{2}}|\Delta|^{\frac{1}{n+1}} \log{|\Delta|}^{n}$ and let $t\ge t_0>0$. 
    %Consider a bound for the ideal counting function in the following form:     \begin{align} \label{eq:idealcountingf}    I_K(x):= \sum_{n\le x} a(n) = \kappa x + \vartheta C x^{1-f(n_K)},\end{align} where $C:=C_{\Delta_K,n_K}$ is an explicit constant, $\vartheta$ is a complex number of absolute value $|\vartheta|\le1$, and $0<f(n_K)<1$ is a function of $n_K$. 
    Then,
    \begin{align} \label{eq:result}|\zeta_K(1+it)| & \le \left(\frac{n+1}{2}\right)\kappa\log{t}+2\kappa  + \left(\frac{n+1}{2}\right)\kappa\log{C} - \left(\frac{n+1}{2}\right)\kappa\log{\kappa}
\\&\nonumber +\left(\frac{n+1}{2}\right)C +\frac{\sqrt{1+t_0^2}}{t_0}\kappa + \left(\frac{n+1}{2}\right)\left(1+\log2+\frac{1}{2t_0}\right)\kappa.\end{align}
\end{thm}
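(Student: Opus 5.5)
We will follow the classical route for $\zeta(1+it)=O(\log t)$: split the Dirichlet series at a parameter $x$ and use partial summation with an explicit ideal counting estimate. Write $I_K(x)=\#\{\mathfrak a:\mathcal N\mathfrak a\le x\}$. We will use an explicit bound of the form
\begin{equation}
I_K(x)=\kappa x+\vartheta C x^{1-\frac{2}{n+1}},\qquad |\vartheta|\le 1,\ x\ge1,
\end{equation}
with $C$ as in the statement. Such a result is available in the literature: the exponent $1-2/(n+1)$ and the shape $e^{O(n)}(n+1)^{O(n)}|\Delta|^{1/(n+1)}\log|\Delta|^n$ are those of the known explicit ideal-counting theorems. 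The factor $\frac{n+1}{2}$ in \eqref{eq:result} is exactly $1/(1-\alpha)$ with $\alpha=1-\frac{2}{n+1}$, which is why this error term is the natural input.

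For $\sigma>1$ we write, by Abel summation with $E(u)=I_K(u)-\kappa u$,
\begin{equation}
\zeta_K(s)=\sum_{\mathcal N\mathfrak a\le x}\mathcal N\mathfrak a^{-s}+\int_x^\infty u^{-s}\,dI_K(u)
=\sum_{\mathcal N\mathfrak a\le x}\mathcal N\mathfrak a^{-s}-\frac{I_K(x)}{x^{s}}+\kappa\frac{x^{1-s}}{s-1}+s\int_x^\infty E(u)u^{-s-1}\,du .
\end{equation}
The last integral converges absolutely for $\sigma>1-\frac{2}{n+1}$, so the identity continues to $s=1+it$.

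We then estimate each piece at $s=1+it$.
\begin{itemize}
\item The first sum is handled by partial summation: $\sum_{\mathcal N\mathfrak a\le x}\mathcal N\mathfrak a^{-1}\le 1+\int_1^x I_K(u)u^{-2}\,du\le \kappa\log x+\kappa+\frac{n+1}{2}C$, after splitting $I_K$ into main and error parts.
\item The boundary term satisfies $|I_K(x)/x^{1+it}|\le \kappa+Cx^{-2/(n+1)}$.
\item The polar term is at most $\kappa/t$.
\item The tail integral is bounded by $|1+it|\,C\int_x^\infty u^{-1-2/(n+1)}\,du=\frac{n+1}{2}\sqrt{1+t^2}\,C x^{-2/(n+1)}$.
\end{itemize}

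Next we choose $x$ to balance $\kappa\log x$ against $\frac{n+1}{2}tCx^{-2/(n+1)}$. The natural choice is
\begin{equation}
x^{2/(n+1)}=\frac{tC}{\kappa}, \qquad\text{that is}\qquad \log x=\frac{n+1}{2}\bigl(\log t+\log C-\log\kappa\bigr).
\end{equation}
This produces exactly the terms $\frac{n+1}{2}\kappa(\log t+\log C-\log\kappa)$ in \eqref{eq:result}. With this choice, $Cx^{-2/(n+1)}=\kappa/t$, so the error terms become multiples of $\kappa$. Specifically:
\begin{itemize}
\item $\sqrt{1+t^2}\,\kappa/t\le\frac{\sqrt{1+t_0^2}}{t_0}\kappa$, using that $\sqrt{1+t^2}/t$ is decreasing and $t\ge t_0$;
\item the boundary and polar terms are $O(\kappa)$ with $1/t\le 1/t_0$;
\item the constants $\log 2$ and $\frac{1}{2t_0}$ should come from bounding $|s|$ or from the rounding of $x$.
\end{itemize}
Collecting all pieces gives \eqref{eq:result} with the constant $2\kappa$ and the term $\frac{n+1}{2}(1+\log2+\frac{1}{2t_0})\kappa$.

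The main obstacle is the bookkeeping of constants so that the precise form of \eqref{eq:result} comes out. In particular we must:
\begin{itemize}
\item track how the error term inside $\sum_{\mathcal N\mathfrak a\le x}\mathcal N\mathfrak a^{-1}$ contributes the $\frac{n+1}{2}C$ term;
\item ensure $x\ge1$; if $tC/\kappa<1$ we would set $x=1$ and check that the bound still holds, since $C$ is large compared to $\kappa$.
\end{itemize}
The other possible difficulty is securing the explicit ideal-counting input with exactly this $C$. We would cite it rather than prove it.
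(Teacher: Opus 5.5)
The gap is in your choice of $x$. Your overall route is the paper's: partial summation at a cutoff $x$, splitting $I_K(u)$ into $\kappa u$ plus Sunley's error term, continuing to $\sigma>1-\frac{2}{n+1}$, then choosing $x$. But with $x^{2/(n+1)}=tC/\kappa$ the pieces do \emph{not} collect into the stated bound. Your own four estimates give exactly
\[
|\zeta_K(1+it)|\le \tfrac{n+1}{2}\kappa\bigl(\log t+\log C-\log\kappa\bigr)+2\kappa+\tfrac{n+1}{2}C+\frac{2\kappa}{t}+\frac{n+1}{2}\cdot\frac{\sqrt{1+t^2}}{t}\,\kappa .
\]
So you would need $\frac{2}{t}+\frac{n+1}{2}\cdot\frac{\sqrt{1+t^2}}{t}\le \frac{\sqrt{1+t_0^2}}{t_0}+\frac{n+1}{2}\bigl(1+\log 2+\frac{1}{2t_0}\bigr)$ for all $t\ge t_0$. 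At $t=t_0\to0$ the left side is about $\frac{n+5}{2t_0}$ and the right side about $\frac{n+5}{4t_0}$. Concretely, for $n=2$ and $t=t_0=\frac12$ it is $7.35$ against $6.28$. Your hope that $\log 2$ and $\frac{1}{2t_0}$ ``come from bounding $|s|$ or rounding $x$'' therefore cannot be realised. The terms you already have are too large by a factor of about $2$ in the coefficient of $1/t_0$. The cause is that $x^{2/(n+1)}\propto t$ makes the cutoff too short for small $t$. The tail $\frac{n+1}{2}\sqrt{1+t^2}\,Cx^{-2/(n+1)}$ then grows like $\frac{n+1}{2}\kappa/t$, and, as you noticed, $x$ can even drop below $1$.

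The missing step is to choose $x$ so that all the $x^{-2/(n+1)}$ terms collapse to a $t$-independent multiple of $\kappa$, pushing the $t$-dependence into $\kappa\log x$. In the paper's bookkeeping, which keeps the $-\frac{n+1}{2}Cx^{-2/(n+1)}$ coming from the finite sum, these terms total $\frac{n+1}{2}Cx^{-2/(n+1)}\bigl(\frac{4}{n+1}-1+\sqrt{1+t^2}\bigr)$. The minimising choice $x^{2/(n+1)}=\frac{C}{\kappa}\bigl(\frac{4}{n+1}-1+\sqrt{1+t^2}\bigr)$ turns them into exactly $\frac{n+1}{2}\kappa$, which is the ``$1$'' in $\frac{n+1}{2}(1+\log 2+\frac{1}{2t_0})\kappa$. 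Meanwhile $\kappa\log x=\frac{n+1}{2}\kappa\bigl(\log C-\log\kappa+\log(\frac{4}{n+1}-1+\sqrt{1+t^2})\bigr)$, and $\log\bigl(\frac{4}{n+1}-1+\sqrt{1+t^2}\bigr)\le\log t+\log 2+\frac{1}{2t}$ supplies $\log 2$ and $\frac{1}{2t_0}$.

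With your own estimates, the analogous choice $x^{2/(n+1)}=\frac{C}{\kappa}\bigl(\frac{2}{n+1}+\sqrt{1+t^2}\bigr)$ works just as well. Your polar term $\kappa/t\le\kappa/t_0$ is absorbed by $\frac{\sqrt{1+t_0^2}}{t_0}\kappa$, and this choice keeps $x\ge1$ whenever $C\ge\kappa$.

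Two smaller slips do not affect the final inequality. First, the continued identity should read $-E(x)x^{-s}+\kappa\frac{x^{1-s}}{s-1}$ (equivalently $-I_K(x)x^{-s}+s\kappa\frac{x^{1-s}}{s-1}$). Second, partial summation gives $\frac{I_K(x)}{x}+\int_1^x I_K(u)u^{-2}\,du$ rather than $1+\int_1^x I_K(u)u^{-2}\,du$. In both cases the bounds you actually use for these pieces still dominate the correct expressions.
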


This estimate could be used for similar future applications (one could improve on \cite{HanShenWong} using the same techniques as in \cite{BellWong}, \cite{Tru14}).

To obtain a fully explicit result\footnote{Here we mean that the result depends only on $n$ and $\Delta$.}, it is also necessary to take into account the most recent estimates of $\kappa$ (see Section \ref{sec2}). In doing so, we obtain the following corollary.

\begin{cor} \label{cor2}
    Let $K$ be a number field of degree $n$ and discriminant $\Delta$ and denote with $\kappa$ the residue of the Dedekind zeta--function $\zeta_K$ at $s=1$. For $t\ge 1$ we get:
    \begin{align}\label{corresult}
|\zeta_K(1+it)| &\le \left(\frac{e \log |\Delta|}{2(n-1)}\right)^{n-1}\left(\frac{n+1}{2}\right)\log{t} +An^{an}|\Delta|^{\frac{1}{n+1}}\log|\Delta|^n, 
\end{align}
where
$$a=10, \quad\text{ and } \quad A=10^{60}.$$
\end{cor}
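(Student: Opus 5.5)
The plan is to specialise Theorem \ref{thm1} to $t_0=1$, which is allowed since $t\ge1$, and then remove $\kappa$ using two facts. The first is Louboutin's explicit upper bound for the residue, recalled in Section \ref{sec2}:
\[
\kappa\le\Big(\tfrac{e\log|\Delta|}{2(n-1)}\Big)^{n-1}\qquad (n\ge2).
\]
The second is the elementary inequality $-x\log x\le 1/e$ for $x>0$, which disposes of the term $-\tfrac{n+1}{2}\kappa\log\kappa$ without needing any lower bound for $\kappa$. We take $n\ge2$, as the $n-1$ in the denominator of \eqref{corresult} already requires. By Minkowski, $|\Delta|\ge3$, so $\log|\Delta|\ge1$; this lets us freely replace lower powers of $\log|\Delta|$ by $\log|\Delta|^n$.

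With $t_0=1$ the right-hand side of \eqref{eq:result} becomes
\[
\tfrac{n+1}{2}\kappa\log t+\kappa\Big(2+\sqrt2+\tfrac{n+1}{2}\big(\tfrac32+\log2\big)\Big)+\tfrac{n+1}{2}\kappa\log C+\tfrac{n+1}{2e}+\tfrac{n+1}{2}C .
\]
The first term, after inserting Louboutin's bound, is exactly the main term of \eqref{corresult}. It remains to show that everything else is at most $An^{an}|\Delta|^{1/(n+1)}\log|\Delta|^n$, and we treat the pieces one by one.

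\begin{enumerate}
\item \emph{The term $\tfrac{n+1}{2}C$.} This is the dominant term. We have
\[
\tfrac{n+1}{2}C=\tfrac{n+1}{2}\,e^{28.2n+5}(n+1)^{5(n+1)/2}\,|\Delta|^{1/(n+1)}\log|\Delta|^n .
\]
So it suffices to check that $\tfrac{n+1}{2}e^{28.2n+5}(n+1)^{5(n+1)/2}\le \tfrac{A}{3}n^{10n}$ for all $n\ge2$. Taking logarithms, we must compare $28.2n+5+\tfrac{5(n+1)}{2}\log(n+1)+\log\tfrac{n+1}{2}$ with $10n\log n+\log(A/3)$.
   \begin{itemize}
   \item For large $n$ the $10n\log n$ term wins outright, since $10\log n-\tfrac52\log(n+1)$ eventually exceeds $28.2$ plus lower-order terms.
   \item For the finitely many small $n$ the slack $\log(10^{60}/3)\approx 137$ absorbs the difference. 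At $n=2$ the left side is about $10^{30}$.
   \end{itemize}
   This is a one-variable calculus check.

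\item \emph{The term $\tfrac{n+1}{2}\kappa\log C$.} Expand
\[
\log C=28.2n+5+\tfrac{5(n+1)}{2}\log(n+1)+\tfrac{\log|\Delta|}{n+1}+n\log\log|\Delta| .
\]
Multiplying by the bound $\kappa\le e^{n-1}\log|\Delta|^{n-1}$ (using $2(n-1)\ge1$) gives at most $O(n^2e^n\log n)\log|\Delta|^{n}$. Here $\log\log|\Delta|\le\log|\Delta|$ and the $\log|\Delta|/(n+1)$ piece each contribute one extra factor of $\log|\Delta|$. This is again bounded by $\tfrac{A}{3}n^{10n}|\Delta|^{1/(n+1)}\log|\Delta|^n$, with enormous room.

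\item \emph{The remaining terms.} The linear-in-$\kappa$ term is $O(n\,e^{n}\log|\Delta|^{n-1})$, and there is the constant $\tfrac{n+1}{2e}$. Both are trivially at most $\tfrac{A}{3}n^{10n}|\Delta|^{1/(n+1)}\log|\Delta|^n$.
\end{enumerate}

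Summing the three pieces yields the corollary with $a=10$ and $A=10^{60}$. The only step needing genuine care is the numerical verification in the first item, namely that $e^{28.2n+5}(n+1)^{5(n+1)/2}$ times $(n+1)/2$ fits under $A n^{10n}$ uniformly in $n$. I would handle it by checking $n\le N_0$ numerically, for some modest $N_0$ such as $50$. For $n>N_0$ I would show that the logarithmic difference is increasing in $n$ by differentiating. The other estimates are routine absorptions.
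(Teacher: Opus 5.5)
Your proposal is correct, and its skeleton is the paper's: Theorem~\ref{thm1} with $t_0=1$, then Louboutin's bound \eqref{residuebound2000}, then a numerical absorption of everything except the main term into $An^{10n}|\Delta|^{1/(n+1)}\log|\Delta|^n$.

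The genuine difference is how you treat $-\tfrac{n+1}{2}\kappa\log\kappa$. The paper writes the right-hand side as $\tfrac{n+1}{2}\kappa\log t+F_{n,\Delta}(\kappa)$ and asserts that $F_{n,\Delta}$ is increasing. It then substitutes Louboutin's bound into $\kappa\log\kappa$ as well. After that it replaces $\log\log\Delta$ by the larger quantity $f\log\Delta$ inside this \emph{subtracted} term, which goes in the wrong direction for an upper bound.

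Your inequality $-x\log x\le 1/e$ costs only $\tfrac{n+1}{2e}$. It leaves every remaining $\kappa$-term with a nonnegative coefficient, since $\log C>0$. So no monotonicity claim is needed, and the direction issue never arises. Because the dominant term $\tfrac{n+1}{2}C$ does not involve $\kappa$, you lose nothing of consequence in $A$. Your three-way split also shows that $A$ is dictated by $\tfrac{n+1}{2}C$ alone. The paper instead bounds the whole bracket through the auxiliary constants $f$ and $c$.

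One caution about the numerics in your first item: the worst case is not near $n=2$ but at $n=16$. There, $\log\bigl(\tfrac{n+1}{2}e^{28.2n+5}(n+1)^{5(n+1)/2}n^{-10n}\bigr)\approx 135.14$, against your budget $\log(10^{60}/3)\approx 137.06$. The inequality therefore holds, but with only a factor of about $7$ to spare. This term alone already forces $A\approx 5\cdot 10^{58}$ when $a=10$.

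Your plan of checking $n\le 50$ directly will confirm this. The logarithmic difference is in fact increasing for every $n\ge 17$, so your tail argument goes through as described.
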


\begin{rem}
    The constant $A$ depends on the exponent $a$. We choose $10$, although any exponent greater than $2.5$ can be used depending on the range of $n$; see Table \ref{table1} for examples. It should be noted that $a=49$ is the first integer value for which $A<10$.
\end{rem}

\section{Relevant background}\label{sec2}

The first explicit elementary estimate on the $1$--line for $\zeta$ is due to Landau \cite{Land1line}. It was then improved by Backlund \cite{Back}, for $t>50$:
\begin{equation}\label{boundform}
 |\zeta(1+it)|\le C \log{t},
\end{equation} with $C=1$.
Later on Ford \cite{Ford} provided an improvement, building on the work of Richert \cite{Rich}%using the Vinogradov--Korobov estimate for exponential sums
. Backlund's and Ford's results were later improved by Trudgian \cite{Tru}. %Patel \cite{Patel} later gave improved bounds.
The sharpest published explicit results available today are due to Patel \cite{Patel} for $t\ge3$ 
%    $$|\zeta(1+it)|\le \min{\left(\log{t}, \frac{1}{2}\log{t}+1.93,\frac{1}{5}\log{t}+44.02\right)},$$ 
and to Hiary, Leong and Yang \cite{1line} in a specific range. In \cite{christensen2026} Christensen and Pratt provide an improvement for small $t$. Regarding the form \eqref{boundform}, the sharpest bound for $t\ge e$ is due to Qingyi and Teo \cite{qingyi}. %, where $C=0.6443$.

Some progress has also been made in the case of Dirichlet $L$--functions, generalising work on $\zeta$. 
Dudek \cite{Dudek} provides an explicit result for all $t>0$ for moduli $q\ge1$ of non--principal Dirichlet characters $\chi$: $$|L(1+it,\chi)|<\log{\left(t+\frac{14}{5}\right)}+\log{q}+1.$$

It is then natural to ask whether one can generalise known results to find an explicit bound for $\zeta_K$. Given $K$ of degree $n$ and discriminant $\Delta$, it is well--known that $$\zeta_K(s)=\sum_{m=1}^\infty\frac{a(m)}{m^s},$$ where $a(m):=\#\{\mathfrak a\subseteq \mathcal{O}_K: \: \mathcal{N}\mathfrak a = m\}$.
To establish Theorem \ref{thm1}, we need an estimate of the ideal counting function $I_K(x):= \sum_{m\le x} a(m)$.

The earliest bound is attributed to Weber \cite{Web}:
    \begin{equation*} \label{weberbound}
        I_K(x) = \kappa x+O_{n,\Delta}\left(x^{1-\frac{1}{n}}\right).
    \end{equation*}
Fehnker \cite{Fehn} used the geometry of numbers to make it explicit.

%Various other mathematicians worked on the error term in \eqref{weberbound}.

Landau \cite{Land} improved on the exponent from \eqref{weberbound} in 1949:
    \begin{equation} \label{landaubound}
        I_K(x) = \kappa x+O_{n,\Delta}\left(x^{1-\frac{2}{n+1}}\right).
    \end{equation}

This estimate was first made explicit by Sunley \cite{Sunley}.

\begin{prop}[Sunley, 1971] For $x>0$ and $n\ge 2$, we have \begin{align} \label{sunleybound}
|I_K(x)-\kappa x| &\le e^{28.2n+5}(n+1)^{\frac{5(n+1)}{2}}|\Delta|^{\frac{1}{n+1}} \log{|\Delta|}^{n}x^{1-\frac{2}{n+1}}.
\end{align}
\end{prop}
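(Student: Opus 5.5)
The proposition is Sunley's explicit form of Landau's estimate \eqref{landaubound}. I would prove it by making Landau's original argument explicit at every step: smooth $I_K$, evaluate the smoothed sum by contour integration using the functional equation, then undo the smoothing by finite differences.

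\textbf{Step 1 (decomposition into ideal classes).} Write $I_K(x)=\sum_{\mathcal C} I_{\mathcal C}(x)$ over the $h$ ideal classes, with corresponding partial zeta functions $\zeta(s,\mathcal C)$. Each $\zeta(s,\mathcal C)$ has the following properties:
\begin{itemize}
\item it has a simple pole at $s=1$ with residue $\kappa/h$;
\item it satisfies a functional equation under $s\mapsto 1-s$, with gamma factor $\Gamma(s/2)^{r_1}\Gamma(s)^{r_2}$ and conductor $|\Delta|^{1/2}$.
\end{itemize}
It therefore suffices to prove the bound for each $I_{\mathcal C}$, uniformly in $\mathcal C$, and multiply by an explicit bound on $h$. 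Alternatively, one can run the argument directly on $\zeta_K$, whose residue is $\kappa$.

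\textbf{Step 2 (Riesz means).} For an integer $\rho$, of size about $n$, consider
\[
I^{(\rho)}(x)=\frac{1}{\rho!}\sum_{\mathcal N\mathfrak a\le x}(x-\mathcal N\mathfrak a)^{\rho}=\frac{1}{2\pi i}\int_{(2)}\zeta_K(s)\frac{x^{s+\rho}}{s(s+1)\cdots(s+\rho)}\,ds .
\]
Shift the contour to $\Re s=-\varepsilon$, picking up residues at $s=1$ (main term $\kappa x^{\rho+1}/(\rho+1)!$), at $s=0$, and at the negative integers not cancelled by trivial zeros. On the new line, apply the functional equation, expand $\zeta_K(1-s)$ as an absolutely convergent Dirichlet series, and integrate term by term. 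Each term becomes a Bessel/Voronoi-type integral. Bound these explicitly using Stirling's formula with explicit error; the oscillation gives a saving of size
\[
|\Delta|^{\frac12(\cdot)}\,x^{\rho+1-\frac{\rho+1}{n}\cdot(\dots)}\quad\text{with the $x^{\frac{n-1}{2n}}$-type exponent.}
\]
Alternatively, crudely bound $|\zeta_K(1-s)|$ on $\Re s=-\varepsilon$ via the functional equation and $\zeta_K(1+\varepsilon)\le\zeta(1+\varepsilon)^n$. This gives $|\Delta|^{1/2+\varepsilon}$ times a power of $|s|$ of size $n(1/2+\varepsilon)$, which $\rho>n/2$ makes integrable.

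\textbf{Step 3 (de-smoothing).} Since $I_K$ is nondecreasing, take the $\rho$-th forward difference with step $y$:
\[
\Delta_y^{\rho} I^{(\rho)}(x-\rho y)\le y^{\rho} I_K(x)\le \Delta_y^{\rho} I^{(\rho)}(x).
\]
Applied to the main term this gives $\kappa x+O(\kappa\rho y)$. Applied to the error terms it costs $2^{\rho}y^{-\rho}$ times their size. Choosing $y$ to balance $\kappa y$ against $2^{\rho}y^{-\rho}|\Delta|^{1/2}x^{\theta}$ yields Landau's exponent $1-\tfrac{2}{n+1}$. The same optimization produces the factor $|\Delta|^{1/(n+1)}$, which replaces $|\Delta|^{1/2}$ after balancing. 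Finally, bound $\kappa$ from above by $c^n\log|\Delta|^{n-1}$ (via convexity or Louboutin-type bounds) and $h$ via Minkowski; these are where the $\log|\Delta|^{n}$ factor comes from.

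\textbf{Main obstacle.} The hard step is the explicit treatment of the gamma factors and the Voronoi-type integrals in Step 2, uniformly in $r_1,r_2$. Specifically, one must get constants like $e^{28.2n}(n+1)^{5(n+1)/2}$ rather than something worse. I would first try the crude line-integral bound, with $\rho=n$ and $\varepsilon\asymp 1/\log|\Delta|$, accepting a worse exponent. I would then refine with the stationary-phase estimate only if the crude bound fails to give $x^{1-2/(n+1)}$. Small $x$, where the bound is trivial since the right side exceeds $\kappa x+I_K(x)$, I would handle separately.
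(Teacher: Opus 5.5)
\textbf{The paper gives no proof here to compare against.} This proposition is quoted from Sunley's thesis. Read as a reconstruction of Landau's method, your outline has a genuine gap exactly where $x^{1-2/(n+1)}$ and $|\Delta|^{1/(n+1)}$ are supposed to appear.

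\textbf{The crude route fails for every $n\ge 2$.} On $\Re s=-\eta$, convergence needs $\rho>n(\tfrac12+\eta)$, and the Riesz-mean error is then $\ll|\Delta|^{1/2+\eta}x^{\rho-\eta}$. Your de-smoothing therefore gives $x^{(\rho-\eta)/(\rho+1)}$. But $\tfrac{\rho-\eta}{\rho+1}>\tfrac{n-1}{n+1}$ is equivalent to $2\rho+1-n>(n+1)\eta$, which always holds when $\rho>n(\tfrac12+\eta)$. With $\rho=n$ you get roughly $x^{n/(n+1)}$. So the oscillatory analysis is not an optional refinement.

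\textbf{The single balance in Step 3 also misses, even with the Voronoi-type asymptotics.} Put $z=mx/|\Delta|$. The $m$-th dual term of the $\rho$-th Riesz mean has size about $|\Delta|^{1/2}m^{-1}x^{\rho}z^{\frac{n-1}{2n}-\frac{\rho}{n}}$, and the dual series converges absolutely only for $\rho>\tfrac{n-1}{2}$. The resulting bound is $x^{\theta}$ with $\theta=\tfrac{(n-1)(2\rho+1)}{2n}$. Balancing $y\approx 2^{\rho}y^{-\rho}x^{\theta}$ then gives the exponent $\tfrac{(n-1)(2\rho+1)}{2n(\rho+1)}$. This exceeds $\tfrac{n-1}{n+1}$ precisely because $\rho>\tfrac{n-1}{2}$.

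\textbf{The missing idea is Landau's split of the dual series at a parameter $M$.} For $m\le M$, bound the $\rho$-th difference of each oscillating term by $y^{\rho}$ times the maximum of its $\rho$-th derivative. That derivative is $\ll|\Delta|^{1/2}m^{-1}z^{\frac{n-1}{2n}}$, independently of $\rho$. Use the trivial factor $2^{\rho}$ only for $m>M$. Writing $Z=Mx/|\Delta|$, this bounds $|I_K(x)-\kappa x|$, up to factors depending only on $n$ and $\log|\Delta|$, by $y+|\Delta|^{1/2}Z^{\frac{n-1}{2n}}\bigl(1+(x/y)^{\rho}Z^{-\rho/n}\bigr)$. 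Choosing $Z=(x/y)^{n}$ reduces this to $y+|\Delta|^{1/2}(x/y)^{\frac{n-1}{2}}$. Then $y=|\Delta|^{\frac{1}{n+1}}x^{\frac{n-1}{n+1}}$ produces Landau's exponent and the factor $|\Delta|^{1/(n+1)}$ at the same time. Your balance against $|\Delta|^{1/2}x^{\theta}$ would instead give $|\Delta|^{1/(2(\rho+1))}$.

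\textbf{Two further problems.}
\begin{enumerate}
\item Summing a class-uniform bound over ideal classes costs a factor $h$. For imaginary quadratic fields $h$ is of size $|\Delta|^{1/2+o(1)}$, which would swamp $|\Delta|^{1/(n+1)}$. You must work with $\zeta_K$ itself; there the logarithmic factors can come from bounds such as $\kappa\ll_n(\log|\Delta|)^{n-1}$, not from $h$.
\item The proposition is an explicit inequality with the specific constant $e^{28.2n+5}(n+1)^{5(n+1)/2}$. An outline that leaves the kernel asymptotics (uniform in $r_1,r_2$) and every constant undetermined does not establish it. That is exactly the work the paper delegates to Sunley.
\end{enumerate}
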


This explicit bound was later improved by Lee \cite{LeeE} by splitting the values of $n$ in two different ranges.
In 1993, Nowak \cite{Now} provided a different formulation of \eqref{landaubound} for $n\ge 3$:
\begin{equation*}
I_K(x)= \kappa x + O_{n,\Delta}(x^{1-\alpha(n)}(\log{x})^{\beta(n)}),
\end{equation*}
where $\alpha(n)$ and $\beta(n)$ are explicit non--zero constants depending on $n$.
Lao \cite{Lao} improved the exponent of \eqref{landaubound} to $1-3/(n+6)+\varepsilon.$
%: \begin{equation}\label{laobound}    I_K(x) = \kappa x+O_{n,\Delta}\left(x^{1-\frac{3}{n+6}+\varepsilon}\right).\end{equation}
We are not aware of any explicit version of these bounds.

Sharper estimates are available under additional restrictions on the number field. %Implicit results are due to Paul and Sankaranarayanan \cite{PaulSan} for cyclotomic fields and to Languasco, Lunia and Moree \cite{Lang} for the abelian case with $n\ge 4$. 
Explicit results are known for quadratic fields (Huxley and Watt \cite{HuWa}), imaginary quadratic fields (Ramar\'e \cite{Ram}), and cubic fields (M\"uller \cite{Mul}); see also Fomenko \cite{Fom} for specific degree $8$ and $16$ fields and Bordell\`es \cite{Bord1} for a weighted version.

\begin{rem}
    In Theorem \ref{thm1}, we use \eqref{sunleybound} to obtain the result. A sharper bound would follow from Lee's estimate \cite{LeeE}. However, this estimate depends on the range, and a more general concise formulation is preferable for applications. Moreover, the proof can be readily adapted to other explicit bounds if needed.
\end{rem}

For the proof of Corollary \ref{cor2}, one needs to estimate $\kappa:=\mathrm{Res}_{s=1}\zeta_K(s)$.
Siegel \cite{Sieg} was the first to provide such an upper bound, and improvements were then given by Lavrik and Egorov \cite{LavEgo}, and by Qu\^eme \cite{Que1}, \cite{Que2}. A further refinement is due to Louboutin \cite{Lou2000}.
\begin{prop}[Louboutin, 2000] Let $K$ be a number field of degree $n>1$ and discriminant $\Delta$. Then,
    \begin{equation} \label{residuebound2000}
        \mathrm{Res}_{s=1}\zeta_K(s) \le \left(\frac{e \log{|\Delta}|}{2(n-1)}\right)^{n-1}.
    \end{equation}
\end{prop}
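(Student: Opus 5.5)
Although this proposition is quoted from \cite{Lou2000}, here is how I would prove it. The plan is to read $\kappa$ as the value at $s=1$ of an entire function and bound that value by convexity (Phragm\'en--Lindel\"of). The target exponent suggests the right parameters.

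Write $\zeta_K(s)=\zeta(s)L_K(s)$. By Aramata--Brauer, $L_K:=\zeta_K/\zeta$ is entire. Since $\zeta$ has residue $1$ at $s=1$, we get $\kappa=L_K(1)$. Use the completed functions
\[
\Lambda_K(s)=\left(\tfrac{|\Delta|}{\pi^n 4^{r_2}}\right)^{s/2}\Gamma(s/2)^{r_1}\Gamma(s)^{r_2}\zeta_K(s),\qquad \Lambda_{\Q}(s)=\pi^{-s/2}\Gamma(s/2)\zeta(s).
\]
Their quotient $\Lambda_K/\Lambda_{\Q}$ is, up to the gamma factor $\Gamma(s/2)^{r_1-1}\Gamma(s)^{r_2}$, the function $L_K$ multiplied by $|\Delta|^{s/2}$. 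It satisfies $\Phi(s)=\Phi(1-s)$.

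Fix a small $\varepsilon>0$ and work in the strip $-\varepsilon\le\sigma\le 1+\varepsilon$.

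\emph{Right edge.} On $\sigma=1+\varepsilon$, compare Euler products prime by prime: each local factor of $L_K$ is a product of $n-1$ factors of the form $(1-\omega p^{-s})^{-1}$ with $|\omega|\le 1$. Hence
\[
|L_K(1+\varepsilon+it)|\le \zeta(1+\varepsilon)^{n-1}\le (1+1/\varepsilon)^{n-1}.
\]
\emph{Left edge.} On $\sigma=-\varepsilon$, the functional equation transfers this bound, at the cost of the conductor factor $|\Delta|^{1/2+\varepsilon}$ and a ratio of gamma factors.

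To handle the gamma factors cleanly, I would not interpolate $L_K$ itself. Instead I would apply the maximum principle to
\[
F(s)=|\Delta|^{s/2}\,\frac{\Lambda_K(s)}{\Lambda_{\Q}(s)}\,G(s),
\]
where $G$ is a normalising product of gamma functions chosen so that $|F|$ is the same on the two edges. The natural choice is to use $\Gamma$-quotients that are bounded by $1$ in the strip, in the spirit of Rademacher's convexity lemma.

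Convexity at $\sigma=1$ places weight $\frac{\varepsilon}{1+2\varepsilon}$ on the left edge. This yields
\[
\kappa=L_K(1)\le |\Delta|^{\varepsilon/2}\,(1/\varepsilon)^{n-1}\,(1+O(\varepsilon))^{n-1},
\]
with the factor $|\Delta|^{1/2+\varepsilon}$ entering to the power $\varepsilon/(1+2\varepsilon)$, which is at most $\varepsilon/2$ after absorbing lower-order terms. Minimising $|\Delta|^{\varepsilon/2}\varepsilon^{-(n-1)}$ gives $\varepsilon=2(n-1)/\log|\Delta|$. At this value the bound is exactly
\[
\left(\frac{e\log|\Delta|}{2(n-1)}\right)^{n-1}.
\]

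The main obstacle is making the gamma-factor and $(1+1/\varepsilon)$ losses disappear exactly rather than up to constants, so that the clean constant $e$ survives. There are two points to settle:
\begin{itemize}
\item I would try to restrict attention to real $s$, using monotonicity and log-convexity of the relevant function on $[1-\varepsilon,1+\varepsilon]$, rather than full Phragm\'en--Lindel\"of in $t$.
\item I would replace $\zeta(1+\varepsilon)\le 1+1/\varepsilon$ by the sharper statement that $(s-1)\zeta(s)$ at $s=1+\varepsilon$ is compensated by the corresponding factor at $1-\varepsilon$.
\end{itemize}
If this exact bookkeeping fails, the argument still gives the bound with $e$ replaced by a slightly larger absolute constant.
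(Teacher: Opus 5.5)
The paper gives no proof of this proposition; it is quoted from Louboutin, so your sketch has to stand on its own. Your ingredients are the right ones: Aramata--Brauer, $\kappa=L_K(1)$, the Euler-product bound $|L_K(1+\varepsilon+it)|\le\zeta(1+\varepsilon)^{n-1}$, and the choice $\varepsilon=2(n-1)/\log|\Delta|$. But as written the argument does not prove the stated inequality. The step that yields the constant $e$ is explicitly left open, and your fallback gives only $(c\log|\Delta|/(2(n-1)))^{n-1}$ with some $c>e$.

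That loss is not ``slight''. Since $\log|\Delta|$ can be of order $n$ (e.g.\ along class field towers), $\varepsilon$ need not be small; for $n=2$ one can have $\varepsilon=2/\log 3$. So $(1+O(\varepsilon))^{n-1}$ is a genuine factor $C^{n-1}$. These losses come from interpolating between edges of unequal size, using convexity weights and a Rademacher-type normaliser $G$. Your first remedy points the right way but asks for the wrong property: log-convexity on $[1-\varepsilon,1+\varepsilon]$ is neither available in general nor needed. The second remedy is too vague to check. Two minor errors: your $F$ contains $|\Delta|^{s/2}$ twice, and for totally complex $K$ the exponent $r_1-1$ is negative.

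The missing idea is that the completed quotient needs no normaliser at all. Write $\Lambda_F:=\Lambda_K/\Lambda_{\Q}=A^s\Gamma_F(s)L_K(s)$, where $A=|\Delta|^{1/2}\pi^{-(n-1)/2}2^{-r_2}$ and $\Gamma_F(s)=\Gamma(s/2)^{r_1-1}\Gamma(s)^{r_2}$. When $r_1=0$, use the duplication formula to rewrite $\Gamma_F(s)$ as $\pi^{-1/2}2^{s-1}\Gamma(\frac{s+1}{2})\Gamma(s)^{r_2-1}$. Then $\Lambda_F$ is entire of order $1$, real on $\R$, and $\Lambda_F(1-s)=\Lambda_F(s)$, so $|\Lambda_F(-\varepsilon+it)|=|\Lambda_F(1+\varepsilon+it)|$. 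The two edges are already equal. Apply Phragm\'en--Lindel\"of on $-\varepsilon\le\sigma\le 1+\varepsilon$ (your $G\equiv 1$) together with $|\Gamma(x+iy)|\le\Gamma(x)$ to get
\[
\kappa\le A^{\varepsilon}\,\frac{\Gamma_F(1+\varepsilon)}{\Gamma_F(1)}\,\zeta(1+\varepsilon)^{n-1}.
\]
Alternatively, all zeros of $\Lambda_F$ satisfy $0<\Re\rho<1$. The Hadamard product $\Lambda_F(s)=\Lambda_F(0)\prod_\rho(1-s/\rho)$ and $|\rho-1|\le|\rho-\sigma|$ then give $\Lambda_F(1)\le\Lambda_F(\sigma)$ for all real $\sigma\ge1$. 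This monotonicity is the correct form of your first remedy.

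The powers of $\pi$ and $2$ in $A^\varepsilon$, and the gamma ratio, are then gains rather than losses. The right-hand side equals $|\Delta|^{\varepsilon/2}\varepsilon^{-(n-1)}g_1^{r_1-1}g_2^{r_2}$, or $|\Delta|^{\varepsilon/2}\varepsilon^{-(n-1)}g_3\,g_2^{r_2-1}$ if $r_1=0$, where
\[
g_1=\varepsilon\zeta(1+\varepsilon)\pi^{-\varepsilon/2}\frac{\Gamma(\frac{1+\varepsilon}{2})}{\Gamma(\frac12)},\qquad g_2=\bigl(\varepsilon\zeta(1+\varepsilon)\bigr)^2(2\pi)^{-\varepsilon}\Gamma(1+\varepsilon),\qquad g_3=\varepsilon\zeta(1+\varepsilon)\pi^{-\varepsilon/2}\Gamma\bigl(1+\tfrac{\varepsilon}{2}\bigr).
\]
An elementary check shows $g_1,g_2,g_3\le 1$ for $0<\varepsilon\le 2/\log 3$. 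By Minkowski's bound, this range contains $2(n-1)/\log|\Delta|$ for every $K$. With $\varepsilon=2(n-1)/\log|\Delta|$ the bound is exactly $(e\log|\Delta|/(2(n-1)))^{n-1}$. This is, in substance, how the quoted bound is proved.
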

A sharper implicit result was obtained by Li \cite{Xli} and improved by Louboutin \cite{Loub2015} for $n\ge 5$.
% \begin{equation} \label{residuebounds2015}     \mathrm{Res}_{s=1}\zeta_K(s) \le \frac{3}{\sqrt{n}}\left(B\frac{ce^{\gamma+\sqrt{\frac{6}{n}}} \log{|\Delta}|}{n}\right)^{n-1}, \end{equation} 
% where $c=\frac{(1-1/\sqrt{5})}{2}$, $B=1+O(\log{\rho_K)}$, $\gamma$ denotes the Euler--Mascheroni constant and the value of $\rho_K$ can be computed explicitly if $K$ is known.
%The sharpest bound available is obtained combining \eqref{residuebound2000} and \eqref{residuebounds2015}.
Sharper explicit bounds are available when restricting $K$ to specific classes of number fields; see, for example, \cite{Loub2015} for the abelian case.

%\begin{rem} As before, in Corollary \ref{cor2} we chose to fix $t_0=1$ for simplicity and usefulness of the expression. The reader can adapt the method for a sharper result. \end{rem} 

\section{Proof of Theorem \ref{thm1} and Corollary \ref{cor2}}

 \begin{proof}[Proof of Theorem \ref{thm1}]

The proof follows \cite[Theorem 3.5]{titch}.
Let $\sigma >1$ and
\begin{equation*}
    \zeta_K(s) - \sum_{m\le x} \frac{a_K(m)}{m^s} = \sum_{m > x} \frac{a_K(m)}{m^s}.
\end{equation*}
By partial summation,
\begin{equation} \label{eq:partialsumm1}
    \zeta_K(s) - \sum_{m\le x} \frac{a_K(m)}{m^s} = - \frac{I_K(x)}{x^s} - s\int^\infty_x \frac{I_K(u)}{u^{s+1}} du.
\end{equation}
Let $\vartheta$ be a complex number of absolute value $|\vartheta|\le 1$ and let us restate  \begin{equation}\label{eq:idealcountingf}
    C:=e^{28.2n+5}(n+1)^{\frac{5(n+1)}{2}}|\Delta|^{\frac{1}{n+1}} \log{|\Delta|}^{n};
\end{equation}
one can then rewrite \eqref{sunleybound} as \begin{equation}\label{sunleyboundC}
    I_K(x) = \kappa x + \vartheta C x^{1-\frac{2}{n+1}}.
\end{equation}

From \eqref{sunleyboundC}, one gets 
$$ \left|\frac{I_k(x)}{x^s}\right| \le x^{1-\sigma}\kappa+ C x^{1-\frac{2}{n+1}-\sigma}.$$

Let us substitute the bound \eqref{sunleyboundC} in the integral from \eqref{eq:partialsumm1}. We then split it into two convergent integrals as follows:
\begin{align*}
    \left|-s\int^\infty_x \frac{I_K(u)}{u^{s+1}} du \right|&\le \left|-s\int^\infty_x \frac{\kappa u}{u^{s+1}} du\right|+ 
    |s|\int^\infty_x \frac{ C u^{1-\frac{2}{n+1}}}{u^{\sigma+1}} du \\&\le \frac{x^{1-\sigma}\sqrt{\sigma^2+t^2}}{|t|}\kappa + 
    |s|\int^\infty_x \frac{ C u^{1-\frac{2}{n+1}}}{u^{\sigma+1}} du .
   \end{align*}
Replacing the corresponding terms in \eqref{eq:partialsumm1} with these results yields
\begin{align} \label{eq:halfdone}
    |\zeta_K(s)| &\le \left|\sum_{m\le x} \frac{a_K(m)}{m^s}\right|+\frac{x^{1-\sigma}(\sqrt{\sigma^2+t^2}+|t|)}{|t|}\kappa+  C x^{1-\frac{2}{n+1}-\sigma} + 
    |s|\int^\infty_x \frac{  C u^{1-\frac{2}{n+1}}}{u^{\sigma+1}} du.
\end{align}

Since the integral in \eqref{eq:halfdone} converges for $\sigma>1-2/(n+1)>0$, the expression holds in this region by analytic continuation. Now, let $t>0$. 
Let us estimate the last term in \eqref{eq:halfdone} above:
\begin{align} \label{eq5}
    |s|\int^\infty_x \frac{ C u^{1-\frac{2}{n+1}}}{u^{\sigma+1}} du &\le |s|\int^\infty_x \frac{C}{u^{\sigma+\frac{2}{n+1}}} du\\& \nonumber =\frac{C|s|}{(\sigma-1+\frac{2}{n+1})x^{\sigma-1+\frac{2}{n+1}}}.
\end{align}
%Then\begin{equation*}    |\zeta_K(s)| \le \left|\sum_{n\le x} \frac{a_K(n)}{n^s}\right| +  \frac{x^{1-\sigma}}{|t|}\kappa+ C x^{1-\frac{2}{n_K+1}-\sigma}+\frac{C|s|}{(\sigma-1+\frac{2}{n_K+1})x^{\sigma-1+\frac{2}{n_K+1}}}.\end{equation*}

Now fix $\sigma =1$. 
Let us estimate the sum in \eqref{eq:halfdone} using the triangle inequality and partial summation as follows:
\begin{align*}
    \left|\sum_{m\le x} \frac{a_K(m)}{m^{1+it}}\right| & \le \sum_{m\le x} \frac{a_K(m)}{m}\le \frac{I_K(x)}{x} + \int^x_1 \frac{I_K(u)}{u^{2}} du \\&\le \kappa + C x^{-\frac{2}{n+1}} + \int^x_1 \frac{\kappa}{u} du+ \int^x_1 \frac{ C u^{1-\frac{2}{n+1}}}{u^{2}} du \\&= \kappa +  C x^{-\frac{2}{n+1}}+\kappa\log{x}  - \frac{(n+1) }{2}C x^{-\frac{2}{n+1}}+  \frac{(n+1) }{2}C.
\end{align*}   

We then have
\begin{align} \label{eq6}
    |\zeta_K(1+it)|& \le 2\kappa + 2C x^{-\frac{2}{n+1}}+\kappa\log{x} - \frac{(n+1)}{2}C x^{-\frac{2}{n+1}}\\& \nonumber +\frac{(n+1)}{2}C+  \frac{\sqrt{1+t^2}}{t}\kappa+\frac{(n+1)C\sqrt{1+t^2}}{2x^{\frac{2}{n+1}}}.
\end{align}
Since $\sqrt{1+y^2}/y$ is a decreasing function for $y>0$, for $t\ge t_0>0$ one gets that $$\frac{\sqrt{1+t^2}}{t}\kappa \le \frac{\sqrt{1+t_0^2}}{t_0}\kappa.$$

Finally, by using calculus, one finds that the optimal value for $x$ is \begin{equation}\label{optvalue}
x=\Bigg(\frac{C}{\kappa}\left(\frac{4}{ (n+1)}+\sqrt{1+t^2}-1\right)\Bigg)^\frac{n+1}{2}.
\end{equation}
Then, by substituting \eqref{optvalue} in \eqref{eq6}, one obtains 
\begin{align} \label{eqottobre} 
|\zeta_K(1+it)| & \le 2\kappa  + \left(\frac{n+1}{2}\right)\kappa\log{C} - \left(\frac{n+1}{2}\right)\kappa\log{\kappa} \\& \nonumber+\left(\frac{n+1}{2}\right)\kappa\log\left(\frac{4}{n+1}+\sqrt{1+t^2}-1\right) 
\\&\nonumber +\frac{(n+1)}{2}C +\frac{\sqrt{1+t_0^2}}{t_0}\kappa + \frac{n+1}{2}\kappa.\end{align}

For $n\geq2$ and for $y> 0$ we have that $$\frac{4}{n+1}-1\leq \frac{1}{3}<1, \quad  \sqrt{1+y^2}\leq 2y, \quad \log{(y+1)}\le \log{y}+\frac{1}{y}.$$
Hence,
\begin{equation} \label{eqlogt}
    \left(\frac{n+1}{2}\right)\kappa\log\left(\frac{4}{n+1}+\sqrt{1+t^2}-1\right) \leq \left(\frac{n+1}{2}\right)\kappa\left(\log{t}+\log2+\frac{1}{2t_0}\right).
\end{equation}
Substituting \eqref{eqlogt} in \eqref{eqottobre} and simplifying yields \eqref{eq:result}. This completes the proof.
 \end{proof}

%\begin{rem}     Taking the limit as $t\to \infty$ in \eqref{eq:result}, we immediately get \eqref{propOlogt} as a corollary. \end{rem}

One can then prove Corollary \ref{cor2}.

\begin{proof}[Proof of Corollary \ref{cor2}.]
From now on, we shall write $\Delta$ for the absolute value of the discriminant of $K$. 
    Let us start by substituting \eqref{eq:idealcountingf} into \eqref{eq:result} and fixing $t_0=1$. We obtain

    \begin{align}\label{monstereq}
        |\zeta_K(1+it)|\le T_1+T_2+T_3+T_4+T_5+T_6+T_7,
    \end{align}
    where
%     \left(\frac{n+1}{2}\right)\kappa\log{C} - \left(\frac{n+1}{2}\right)\kappa\log{\kappa}
%\\&\nonumber +\frac{(n+1)}{2}C +\frac{\sqrt{1+t_0^2}}{t_0}\kappa + \frac{n+1}{2}\left(1+\frac{1}{t_0}\right)\kappa.
    \begin{align*}
    T_1:=\left(\frac{n+1}{2}\right)\kappa\log{t}, \quad \quad T_2:=2\kappa,
    \end{align*}
    \begin{align*}
    T_3:=\left(\frac{n+1}{2}\right)\kappa\log\left(e^{28.2n+5}(n+1)^{\frac{5(n+1)}{2}}\Delta^\frac{1}{n+1}\log{\Delta}^{n} \right),
    \end{align*}
    \begin{align*}
        T_4:=-\left(\frac{n+1}{2}\right)\kappa\log\kappa, \quad \quad T_5:=\left(\frac{n+1}{2}\right)e^{28.2n+5}(n+1)^{\frac{5(n+1)}{2}}\Delta^\frac{1}{n+1}\log{\Delta}^{n},
    \end{align*}

    \begin{align*}
        T_6:=\sqrt{2}\kappa, \quad \quad T_7:=\left(\frac{n+1}{2}\right)2.1932\kappa.
    \end{align*}

The most significant terms are $T_1$ and $T_5$. 

By Stickelberger's criterion and Minkowski's bound on the discriminant of a number field, we know that $\Delta>3$ (see \cite{Neuk}). We also note that for $n\geq n_0$, we can compute \begin{equation} \label{f}
    f:=f(n_0)\leq 1/e \quad \text{such that} \quad \log\log n\leq f\log n.
\end{equation}
For $n\geq n_0$ we can then bound $T_3$ as follows:
\begin{align}\label{newT3}
T_3&:=\left(\frac{n+1}{2}\right)\kappa\left(28.2n+5+\frac{5(n+1)}{2}\log(n+1)+\frac{1}{n+1}\log\Delta+n\log\log{\Delta} \right)
\\& \nonumber \leq \left(\frac{n+1}{2}\right)\kappa\left(28.2n+5+\frac{5(n+1)}{2}\log(n+1)+\left(\frac{1}{n+1}+fn\right)\log\Delta\right).\end{align}

%Let us set $K:= \left(\frac{e \log \Delta}{2(n-1)}\right)^{n-1}\frac{n+1}{2}$.
%Since for $x\geq0$, $\max (-x \log x)=1/e$, we get \begin{align} \label{newT4}T_4 &\le \left(\frac{n+1}{2e}\right).\end{align}

%Let us now define $$\mathcal{K}:=\left(\frac{e \log{\Delta}}{2(n-1)}\right)^{n-1} \frac{n+1}{2}.$$

%Now, by substituting \eqref{newT3} and \eqref{newT4} in \eqref{monstereq}, bounding the value of $\kappa$ by \eqref{residuebound2000}, factoring $\mathcal{K}$ out and simplifying, we get

We observe that the right hand side of \eqref{monstereq} can be written as
\begin{equation*}
\left(\frac{n+1}{2}\right)\kappa\log{t}+F_{n,\Delta}(\kappa),
\end{equation*}
where $F$ is increasing in $\kappa$. We can then substitute \eqref{newT3} and \eqref{residuebound2000} in \eqref{monstereq} to obtain
\begin{align} \label{eqwithsubs1}
    |\zeta_K(1+it)| &\leq \left(\frac{e\log{\Delta}}{2(n-1)}\right)^{n-1} \left(\frac{n+1}{2}\right)\log{t} + 2\left(\frac{e \log{\Delta}}{2(n-1)}\right)^{n-1}
    \\&+\left(\frac{n+1}{2}\right)\left(\frac{e \log{\Delta}}{2(n-1)}\right)^{n-1} \Big(28.2n+5+\frac{5(n+1)}{2}\log{(n+1)} \nonumber
    \\&+\left(\frac{1}{n+1}+nf\right)\log\Delta\Big) - \left(\frac{n+1}{2}\right)\left(\frac{e \log{\Delta}}{2(n-1)}\right)^{n-1}\times \nonumber
    \\& \times \log{\left(\left(\frac{e \log{\Delta}}{2(n-1)}\right)^{n-1}\right)} +\left(\frac{n+1}{2}\right)e^{28.2n+5}(n+1)^{\frac{5(n+1)}{2}}\Delta^{\frac{1}{n+1}}\log\Delta^n \nonumber
    \\&+\sqrt{2}\left(\frac{e \log{\Delta}}{2(n-1)}\right)^{n-1}+\left(\frac{n+1}{2}\right)2.1932\left(\frac{e \log{\Delta}}{2(n-1)}\right)^{n-1}. \nonumber
\end{align}

Now we observe that, for $n\geq n_0$, we can compute $c:=c(n_0)\leq 3/2$ such that
\begin{equation}\label{c}
 (n+1)^\frac{5(n+1)}{2}(n+1)\leq (cn)^\frac{5n}{2}(n+1)^\frac{7}{2}.
\end{equation}
 
First, we wish to solve the logarithm in \eqref{eqwithsubs1} and substitute \eqref{f} and \eqref{c}; then, we want to factor out $\Delta^\frac{1}{n+1}\log\Delta^n$. We thus obtain

\begin{align} \label{eqwithsubs2}
    |\zeta_K(1+it)| &\leq \left(\frac{e\log{\Delta}}{2(n-1)}\right)^{n-1} \left(\frac{n+1}{2}\right)\log{t} + \Delta^\frac{1}{n+1}\log\Delta^n\Bigg[3.5\left(\frac{e \log{\Delta}}{2(n-1)}\right)^{n-1}\frac{1}{\Delta^\frac{1}{n+1}\log\Delta^n}
    \\&+\left(\frac{n+1}{2}\right)\left(\frac{e \log{\Delta}}{2(n-1)}\right)^{n-1} \frac{1}{\Delta^\frac{1}{n+1}\log\Delta^n}\Big(28.2n+5+\frac{5(n+1)}{2}\log{(n+1)} \nonumber
    \\&+\left(\frac{1}{n+1}+nf\right)\log\Delta\Big) - \left(\frac{n+1}{2}\right)\left(\frac{e \log{\Delta}}{2(n-1)}\right)^{n-1}\frac{1}{\Delta^\frac{1}{n+1}\log\Delta^n}\times \nonumber
    \\&\times (n-1)(1+f\log\Delta-\log2-\log(n-1))+\frac{c^{\frac{5n}{2}}}{2}(n+1)^\frac{7}{2}n^\frac{5n}{2}e^{28.2+5}\nonumber
    \\& +\left(\frac{e\log{\Delta}}{2(n-1)}\right)^{n-1} \left(\frac{n+1}{2}\right)\frac{2.1932}{\Delta^\frac{1}{n+1}\log\Delta^n}\Bigg]. \nonumber
\end{align}

From $\Delta>3$ we know that, for $n\geq 2$, $\Delta^\frac{1}{n+1}\log\Delta>1/1.09$ and $\Delta^\frac{1}{n+1}>1$.

Now, by computation, we bound the expression in square brackets in \eqref{eqwithsubs2} with a quantity of the form
\begin{align} \label{an}
 An^{an}.
\end{align}
We also look for an optimal value $n_0$, such that combining the results for $n\leq n_0$ and $n>n_0$ gives us a sharper bound. We then obtain the following table.
\begin{table}[h]
    \centering
    \begin{tabular}{|p{30mm}|c|p{60mm}|c|}
    \hline
        $A$ & $a$ \\ \hline  
        \rule[-1ex]{0pt}{4ex}%$10^{3048278}$  &  $4$ \\
        $10^{1780}$           & $6$ \\
        $10^{159}$                &  $8$ \\
        $10^{60} $            & $10$ \\
        $10^{19}$           & $20$ \\
        $8.31$                &  $49$ \\ \hline
    \end{tabular}
    \captionsetup{skip=2pt}
    \caption{Value of $A$ for some choices of $a$.}
    \label{table1}
\end{table}

Substituting \eqref{an} in \eqref{eqwithsubs2} for $a=10$ yields \eqref{corresult}. This concludes the proof.
\end{proof}

%\begin{rem}     For big values of $a$, such as $20$ and $49$, we tried to bound $\Delta^{1/(n+1)}>3^{1/(n_0+1)}$, since the max of the function of $n$ in the square parenthesis occurs at $n=2$. However, this didn't produce any improvement on the value of the constant $A$.\end{rem}

\section*{Acknowledgements}
The author is grateful to Tim Trudgian for suggesting the problem and providing valuable feedback. The author thanks Olivier Bordell\`es for helpful references on estimates of $I_K$ and Matthew Fernando for insightful discussions. 
This research was supported by the Commonwealth through an Australian Government Research Training Program Scholarship [DOI: https://doi.org/10.82133/C42F-K220].
\bibliographystyle{plain}
\bibliography{refs}

@book {titch,
    AUTHOR = {Titchmarsh, E. C.},
     TITLE = {The theory of the {R}iemann zeta-function, second edition},
      NOTE = {Revised by D. R. Heath-Brown},
 PUBLISHER = {The Clarendon Press, Oxford University Press, New York},
      YEAR = {1986},
   }

@article {ChaNar,
    AUTHOR = {Chandrasekharan, K. and Narasimhan, R.},
     TITLE = {The approximate functional equation for a class of
              zeta-functions},
   JOURNAL = {Math. Ann.},
  FJOURNAL = {Mathematische Annalen},
    VOLUME = {152},
      YEAR = {1963},
     PAGES = {30--64},
      ISSN = {0025-5831,1432-1807},
   MRCLASS = {10.41},
  MRNUMBER = {153643},
MRREVIEWER = {S.\ Chowla},
       DOI = {10.1007/BF01343729},
       URL = {https://doi-org.wwwproxy1.library.unsw.edu.au/10.1007/BF01343729},
}

@article {LeeE,
    AUTHOR = {Lee, E. S.},
     TITLE = {On the number of integral ideals in a number field},
   JOURNAL = {J. Math. Anal. Appl.},
  FJOURNAL = {Journal of Mathematical Analysis and Applications},
    VOLUME = {517},
      YEAR = {2023},
    NUMBER = {1},
     PAGES = {Paper No. 126585, 25},
      ISSN = {0022-247X,1096-0813},
   MRCLASS = {11R42},
  MRNUMBER = {4469611},
MRREVIEWER = {Krishnarjun\ Krishnamoorthy},
       DOI = {10.1016/j.jmaa.2022.126585},
       URL = {https://doi-org.wwwproxy1.library.unsw.edu.au/10.1016/j.jmaa.2022.126585},
}

@article {Bord1,
    AUTHOR = {Bordell\`es, O.},
     TITLE = {On the ideal theorem for number fields},
   JOURNAL = {Funct. Approx. Comment. Math.},
  FJOURNAL = {Uniwersytet im. Adama Mickiewicza w Poznaniu. Wydzia\l\
              Matematyki i Informatyki. Functiones et Approximatio
              Commentarii Mathematici},
    VOLUME = {53},
      YEAR = {2015},
    NUMBER = {1},
     PAGES = {31--45},
      ISSN = {0208-6573,2080-9433},
   MRCLASS = {11N37 (11L07 11R42)},
  MRNUMBER = {3402771},
MRREVIEWER = {Volker\ Ziegler},
       DOI = {10.7169/facm/2015.53.1.3},
       URL = {https://doi-org.wwwproxy1.library.unsw.edu.au/10.7169/facm/2015.53.1.3},
}

@article {Ram,
    AUTHOR = {Ramar\'e, O.},
     TITLE = {Explicit count of integral ideals of an imaginary quadratic
              field},
   JOURNAL = {Ann. Math. Qu\'e.},
  FJOURNAL = {Annales Math\'ematiques du Qu\'ebec},
    VOLUME = {49},
      YEAR = {2025},
    NUMBER = {2},
     PAGES = {463--475},
      ISSN = {2195-4755,2195-4763},
   MRCLASS = {11N37 (11N45 11R11)},
  MRNUMBER = {4967772},
MRREVIEWER = {W.\ Narkiewicz},
       DOI = {10.1007/s40316-025-00243-0},
       URL = {https://doi-org.wwwproxy1.library.unsw.edu.au/10.1007/s40316-025-00243-0},
}

@article {Dudek,
    AUTHOR = {Dudek, A. W.},
     TITLE = {An explicit result for {$|L(1+it,\chi)|$}},
   JOURNAL = {Funct. Approx. Comment. Math.},
  FJOURNAL = {Uniwersytet im. Adama Mickiewicza w Poznaniu. Wydzia\l\
              Matematyki i Informatyki. Functiones et Approximatio
              Commentarii Mathematici},
    VOLUME = {53},
      YEAR = {2015},
    NUMBER = {1},
     PAGES = {23--29},
      ISSN = {0208-6573,2080-9433},
   MRCLASS = {11M06},
  MRNUMBER = {3402770},
MRREVIEWER = {Marc\ Munsch},
       DOI = {10.7169/facm/2015.53.1.2},
       URL = {https://doi-org.wwwproxy1.library.unsw.edu.au/10.7169/facm/2015.53.1.2},
}

@book {Land,
    AUTHOR = {Landau, E.},
     TITLE = {Einf\"uhrung in die elementare und analytische {T}heorie der algebraischen {Z}ahlen und der {I}deale},
 PUBLISHER = {Chelsea Publishing Co., New York},
      YEAR = {1949},
     PAGES = {vii+147},
   MRCLASS = {10.0X},
  MRNUMBER = {31002},
}

@book{Web,
    author = {Weber, H.},
    title = {Lehrbuch der Algebra},
    volume = {II},
    publisher = {Vieweg, Braunschweig},
    year = {1896}
}

@book {Sunley,
    author = {Sunley, J. E. S.},
     title = {On the class numbers of totally imaginary quadratic extensions of totally real fields},
      note = {Thesis (Ph.D.)--University of Maryland, College Park},
 publisher = {ProQuest LLC, Ann Arbor, MI},
      year = {1971},
     pages = {111},
   MRCLASS = {99-05},
  MRNUMBER = {2621002},
       URL =
              {http://gateway.proquest.com.wwwproxy1.library.unsw.edu.au/openurl?url_ver=Z39.88-2004&rft_val_fmt=info:ofi/fmt:kev:mtx:dissertation&res_dat=xri:pqdiss&rft_dat=xri:pqdiss:7201650},
}

@article{1line,
author = {G. A. Hiary and N. Leong and A. Yang},
title = {{Explicit bounds for the Riemann zeta-function on the $1$-line}},
volume = {73},
journal = {Funct. Approx. Comment. Math.},
number = {1},
publisher = {Adam Mickiewicz University, Faculty of Mathematics and Computer Science},
pages = {53 -- 89},
year = {2025},
doi = {10.7169/facm/240711-13-8},
URL = {https://doi.org/10.7169/facm/240711-13-8}
}

@article {Patel,
    AUTHOR = {Patel, D.},
     TITLE = {An explicit upper bound for {$|\zeta (1+it) |$}},
   JOURNAL = {Indag. Math. (N.S.)},
  FJOURNAL = {Koninklijke Nederlandse Akademie van Wetenschappen.
              Indagationes Mathematicae. New Series},
    VOLUME = {33},
      YEAR = {2022},
    NUMBER = {5},
     PAGES = {1012--1032},
      ISSN = {0019-3577,1872-6100},
   MRCLASS = {11M06},
  MRNUMBER = {4470692},
MRREVIEWER = {Athanasios\ Sourmelidis},
       DOI = {10.1016/j.indag.2022.04.007},
       URL = {https://doi-org.wwwproxy1.library.unsw.edu.au/10.1016/j.indag.2022.04.007},
}

@article {Ford,
    AUTHOR = {Ford, K.},
     TITLE = {Vinogradov's integral and bounds for the {R}iemann zeta
              function},
   JOURNAL = {Proc. London Math. Soc. (3)},
  FJOURNAL = {Proceedings of the London Mathematical Society. Third Series},
    VOLUME = {85},
      YEAR = {2002},
    NUMBER = {3},
     PAGES = {565--633},
      ISSN = {0024-6115,1460-244X},
   MRCLASS = {11L15 (11D72 11M06 11M26 11M35 11Y35)},
  MRNUMBER = {1936814},
MRREVIEWER = {D.\ R.\ Heath-Brown},
       DOI = {10.1112/S0024611502013655},
       URL = {https://doi-org.wwwproxy1.library.unsw.edu.au/10.1112/S0024611502013655},
}

@article {Tru,
    AUTHOR = {Trudgian, T.},
     TITLE = {A new upper bound for {$|\zeta(1+it)|$}},
   JOURNAL = {Bull. Aust. Math. Soc.},
  FJOURNAL = {Bulletin of the Australian Mathematical Society},
    VOLUME = {89},
      YEAR = {2014},
    NUMBER = {2},
     PAGES = {259--264},
      ISSN = {0004-9727,1755-1633},
   MRCLASS = {11M06},
  MRNUMBER = {3182661},
MRREVIEWER = {St\'ephane\ R.\ Louboutin},
       DOI = {10.1017/S0004972713000415},
       URL = {https://doi-org.wwwproxy1.library.unsw.edu.au/10.1017/S0004972713000415},
}

@article {Back,
    AUTHOR = {Backlund, R. J.},
     TITLE = {{Sur les zéros de la fonction $\zeta(s)$ de Riemann}},
   JOURNAL = {C. R. Math. Acad. Sci. Paris},
    VOLUME = {158},
      YEAR = {1914},
     PAGES = {1979--1982},
}

@article {Rich,
    AUTHOR = {Richert, H.-E.},
     TITLE = {Zur {A}bsch\"atzung der {R}iemannschen {Z}etafunktion in der
              {N}\"ahe der {V}ertikalen {$\sigma =1$}},
   JOURNAL = {Math. Ann.},
  FJOURNAL = {Mathematische Annalen},
    VOLUME = {169},
      YEAR = {1967},
     PAGES = {97--101},
      ISSN = {0025-5831,1432-1807},
   MRCLASS = {10.41},
  MRNUMBER = {205954},
MRREVIEWER = {S.\ Knapowski},
       DOI = {10.1007/BF01399533},
       URL = {https://doi-org.wwwproxy1.library.unsw.edu.au/10.1007/BF01399533},
}

@article {Loub2015,
    AUTHOR = {Louboutin, S. R.},
     TITLE = {Explicit upper bounds for residues of {D}edekind zeta
              functions},
   JOURNAL = {Mosc. Math. J.},
  FJOURNAL = {Moscow Mathematical Journal},
    VOLUME = {15},
      YEAR = {2015},
    NUMBER = {4},
     PAGES = {727--740},
      ISSN = {1609-3321,1609-4514},
   MRCLASS = {11R42 (11R29)},
  MRNUMBER = {3438830},
MRREVIEWER = {Henri\ Johnston},
       DOI = {10.17323/1609-4514-2015-15-4-727-740},
       URL = {https://doi-org.wwwproxy1.library.unsw.edu.au/10.17323/1609-4514-2015-15-4-727-740},
}

@article {Lou2000,
    AUTHOR = {Louboutin, S.},
     TITLE = {Explicit bounds for residues of {D}edekind zeta functions,
              values of {$L$}-functions at {$s=1$}, and relative class
              numbers},
   JOURNAL = {J. Number Theory},
  FJOURNAL = {Journal of Number Theory},
    VOLUME = {85},
      YEAR = {2000},
    NUMBER = {2},
     PAGES = {263--282},
      ISSN = {0022-314X,1096-1658},
   MRCLASS = {11R42 (11M20)},
  MRNUMBER = {1802716},
       DOI = {10.1006/jnth.2000.2545},
       URL = {https://doi-org.wwwproxy1.library.unsw.edu.au/10.1006/jnth.2000.2545},
}

@article {Sieg,
    AUTHOR = {Siegel, C. L.},
     TITLE = {Absch\"atzung von {E}inheiten},
   JOURNAL = {Nachr. Akad. Wiss. G\"ottingen Math.-Phys. Kl. II},
  FJOURNAL = {Nachrichten der Akademie der Wissenschaften zu G\"ottingen.
              II. Mathematisch-Physikalische Klasse},
    VOLUME = {1969},
      YEAR = {1969},
     PAGES = {71--86},
      ISSN = {0065-5295},
   MRCLASS = {10.65},
  MRNUMBER = {249395},
MRREVIEWER = {S.\ Chowla},
}

@article {Que1,
    AUTHOR = {Qu\^eme, R.},
     TITLE = {Une relation d'in\'egalit\'e{} entre discriminant, nombre de
              classes et r\'egulateur des corps de nombres},
   JOURNAL = {C. R. Acad. Sci. Paris S\'er. I Math.},
  FJOURNAL = {Comptes Rendus des S\'eances de l'Acad\'emie des Sciences.
              S\'erie I. Math\'ematique},
    VOLUME = {306},
      YEAR = {1988},
    NUMBER = {1},
     PAGES = {5--10},
      ISSN = {0249-6291},
   MRCLASS = {11R29},
  MRNUMBER = {929268},
}

@incollection {Que2,
    AUTHOR = {Qu\^eme, R.},
     TITLE = {Relations d'in\'egalit\'es effectives en th\'eorie
              alg\'ebrique des nombres},
 BOOKTITLE = {S\'eminaire de {T}h\'eorie des {N}ombres, 1987--1988
              ({T}alence, 1987--1988)},
     PAGES = {Exp. No. 19, 19},
 PUBLISHER = {Univ. Bordeaux I, Talence},
      YEAR = {1988},
   MRCLASS = {11R29},
  MRNUMBER = {993117},
MRREVIEWER = {Kurt\ Girstmair},
}

@article {LavEgo,
    AUTHOR = {Lavrik, A. F. and Edgorov, \v{Z.}},
     TITLE = {The product of the number of divisor classes by the regulator
              of algebraic number fields},
   JOURNAL = {Izv. Akad. Nauk UzSSR Ser. Fiz.-Mat. Nauk},
  FJOURNAL = {Izvestiya Akademii Nauk UzSSR. Seriya Fiziko-Matematicheskikh
              Nauk. UzSSR Fanlar Akademiyasining Akhboroti.
              Fizika-Matematika Fanlari Seriyasi},
      YEAR = {1975},
    NUMBER = {2},
     PAGES = {77--79},
      ISSN = {0131-8012},
   MRCLASS = {12A50},
  MRNUMBER = {382227},
MRREVIEWER = {J.\ S.\ Joel},
}

@article {Xli,
    AUTHOR = {Li, X.},
     TITLE = {The smallest prime that does not split completely in a number
              field},
   JOURNAL = {Algebra Number Theory},
  FJOURNAL = {Algebra \& Number Theory},
    VOLUME = {6},
      YEAR = {2012},
    NUMBER = {6},
     PAGES = {1061--1096},
      ISSN = {1937-0652,1944-7833},
   MRCLASS = {11N60 (11R42)},
  MRNUMBER = {2968634},
MRREVIEWER = {Ayyadurai\ Sankaranarayanan},
       DOI = {10.2140/ant.2012.6.1061},
       URL = {https://doi-org.wwwproxy1.library.unsw.edu.au/10.2140/ant.2012.6.1061},
}

@article{qingyi,
    author = {Hoo, E. Q. and Teo, L.-P.},
    title = {Explicit bounds of ${|\zeta\left(1+it\right)|}$},
    journal = {Rend. Semin. mat. Univ. Padova},
    year = {2026},
    NUMBER = {6},
    PAGES = {201--233},
    DOI = {10.4171/RSMUP/188},
}

@misc{christensen2026,
    AUTHOR = {Christensen, A. and Pratt, K.},
     TITLE = {Utilizing smoothing techniques to bound
              {$|\zeta(1+it)|$}},
      YEAR = {2026},
      NOTE = {Preprint, arXiv:2607.01424},
       URL = {https://arxiv.org/abs/2607.01424},
}

@article {Land1line,
    AUTHOR = {Landau, E.},
     TITLE = {Neuer {B}eweis des {P}rimzahlsatzes und {B}eweis des
              {P}rimidealsatzes},
   JOURNAL = {Math. Ann.},
  FJOURNAL = {Mathematische Annalen},
    VOLUME = {56},
      YEAR = {1903},
    NUMBER = {4},
     PAGES = {645--670},
      ISSN = {0025-5831,1432-1807},
   MRCLASS = {99-04},
  MRNUMBER = {1511191},
       DOI = {10.1007/BF01444310},
       URL = {https://doi-org.wwwproxy1.library.unsw.edu.au/10.1007/BF01444310},
}

@misc{Fehn,
    AUTHOR = {Fehnker, A.},
     TITLE = {Explicit counting of ideals in number fields
              of arbitrary degree},
      YEAR = {2026},
      NOTE = {Preprint, arXiv:2604.19681},
       URL = {https://arxiv.org/abs/2604.19681},
}

@article {HuWa,
    AUTHOR = {Huxley, M. N. and Watt, N.},
     TITLE = {The number of ideals in a quadratic field. {II}},
   JOURNAL = {Israel J. Math.},
  FJOURNAL = {Israel Journal of Mathematics},
    VOLUME = {120},
      YEAR = {2000},
     PAGES = {125--153},
      ISSN = {0021-2172,1565-8511},
   MRCLASS = {11R47 (11L07 11L40 11P21 11R11)},
  MRNUMBER = {1815373},
MRREVIEWER = {D.\ R.\ Heath-Brown},
       DOI = {10.1007/s11856-000-1274-x},
       URL = {https://doi-org.wwwproxy1.library.unsw.edu.au/10.1007/s11856-000-1274-x},
}

@article {Mul,
    AUTHOR = {M\"uller, W.},
     TITLE = {On the distribution of ideals in cubic number fields},
   JOURNAL = {Monatsh. Math.},
  FJOURNAL = {Monatshefte f\"ur Mathematik},
    VOLUME = {106},
      YEAR = {1988},
    NUMBER = {3},
     PAGES = {211--219},
      ISSN = {0026-9255,1436-5081},
   MRCLASS = {11R44 (11N25)},
  MRNUMBER = {971924},
MRREVIEWER = {Grigori\ Kolesnik},
       DOI = {10.1007/BF01318682},
       URL = {https://doi-org.wwwproxy1.library.unsw.edu.au/10.1007/BF01318682},
}

@article {Now,
    AUTHOR = {Nowak, W. G.},
     TITLE = {On the distribution of integer ideals in algebraic number
              fields},
   JOURNAL = {Math. Nachr.},
  FJOURNAL = {Mathematische Nachrichten},
    VOLUME = {161},
      YEAR = {1993},
     PAGES = {59--74},
      ISSN = {0025-584X,1522-2616},
   MRCLASS = {11R44 (11N45)},
  MRNUMBER = {1251010},
MRREVIEWER = {Ekkehard\ Kr\"atzel},
       DOI = {10.1002/mana.19931610107},
       URL = {https://doi-org.wwwproxy1.library.unsw.edu.au/10.1002/mana.19931610107},
}

@article {Lao,
    AUTHOR = {Lao, H.},
     TITLE = {On the distribution of integral ideals and {H}ecke
              {G}r\"ossencharacters},
   JOURNAL = {Chinese Ann. Math. Ser. B},
  FJOURNAL = {Chinese Annals of Mathematics. Series B},
    VOLUME = {31},
      YEAR = {2010},
    NUMBER = {3},
     PAGES = {385--392},
      ISSN = {0252-9599,1860-6261},
   MRCLASS = {11N37 (11R44)},
  MRNUMBER = {2652933},
MRREVIEWER = {Brundaban\ Sahu},
       DOI = {10.1007/s11401-008-0450-x},
       URL = {https://doi-org.wwwproxy1.library.unsw.edu.au/10.1007/s11401-008-0450-x},
}

@article {Fom,
    AUTHOR = {Fomenko, O. M.},
     TITLE = {On the {D}edekind zeta function. {II}},
   JOURNAL = {J. Math. Sci. (N.Y.)},
  FJOURNAL = {Journal of Mathematical Sciences (New York)},
    VOLUME = {207},
      YEAR = {2015},
    NUMBER = {6},
     PAGES = {923--933},
      ISSN = {1072-3374,1573-8795},
   MRCLASS = {11R42},
  MRNUMBER = {3374220},
MRREVIEWER = {Goran\ Djankovi\'c},
       DOI = {10.1007/s10958-015-2415-4},
       URL = {https://doi-org.wwwproxy1.library.unsw.edu.au/10.1007/s10958-015-2415-4},
}

@article {BellWong,
    AUTHOR = {Bellotti, C. and Wong, P.-J.},
     TITLE = {Improved estimates for the argument and zero-counting
              function of the {R}iemann zeta-function},
   JOURNAL = {Math. Comp.},
    YEAR = {2026},
     NOTE = {Published electronically November 14, 2025},
      DOI = {10.1090/mcom/4133},
      URL = {https://doi.org/10.1090/mcom/4133},
}

@article {HanShenWong,
    AUTHOR = {Hasanalizade, E. and Shen, Q. and Wong, P.-J.},
     TITLE = {Counting zeros of {D}edekind zeta functions},
   JOURNAL = {Math. Comp.},
  FJOURNAL = {Mathematics of Computation},
    VOLUME = {91},
      YEAR = {2021},
    NUMBER = {333},
     PAGES = {277--293},
      ISSN = {0025-5718,1088-6842},
   MRCLASS = {11R42 (11M26)},
  MRNUMBER = {4350540},
MRREVIEWER = {St\'ephane\ R.\ Louboutin},
       DOI = {10.1090/mcom/3665},
       URL = {https://doi.org/10.1090/mcom/3665},
}

@book {LangBook,
    AUTHOR = {Lang, S.},
     TITLE = {Algebraic Number Theory},
 PUBLISHER = {Addison-Wesley Publishing Co., Inc., Reading, Mass.-London-Don
              Mills, Ont.},
      YEAR = {1970},
}

@article {Tru14,
    AUTHOR = {Trudgian, T. S.},
     TITLE = {An improved upper bound for the argument of the {R}iemann
              zeta-function on the critical line {II}},
   JOURNAL = {J. Number Theory},
  FJOURNAL = {Journal of Number Theory},
    VOLUME = {134},
      YEAR = {2014},
     PAGES = {280--292},
      ISSN = {0022-314X,1096-1658},
   MRCLASS = {11M06 (11M26)},
  MRNUMBER = {3111568},
MRREVIEWER = {Haseo\ Ki},
       DOI = {10.1016/j.jnt.2013.07.017},
       URL = {https://doi.org/10.1016/j.jnt.2013.07.017},
}

@book {Neuk,
    AUTHOR = {Neukirch, J.},
     TITLE = {Algebraic number theory},
    SERIES = {Grundlehren der mathematischen Wissenschaften [Fundamental
              Principles of Mathematical Sciences]},
    VOLUME = {322},
 PUBLISHER = {Springer-Verlag, Berlin},
      YEAR = {1999},
     PAGES = {xviii+571},
      ISBN = {3-540-65399-6},
   MRCLASS = {11Rxx (11-02 11S15 11S31 14C40)},
  MRNUMBER = {1697859},
MRREVIEWER = {Cornelius\ Greither},
       DOI = {10.1007/978-3-662-03983-0},
       URL = {https://doi.org/10.1007/978-3-662-03983-0},
}
\end{document}